\newcommand{\ZZ}{\mathbb Z}
\newtheorem{lemma}{Lemma}[section]
\newtheorem{theorem}[lemma]{Theorem}
\newtheorem{definition}[lemma]{Definition}
\newtheorem{remark}[lemma]{Remark}
\begin{document}
\title[Irreducible free divisors in $\mathbb{P}^2$]
{A family of irreducible free divisors in $\mathbb{P}^2$}

\thanks{$^\dag$ Supported by the INdAM-COFUND Marie Curie Fellowship, Italy.}
\thanks{2010 {\it Mathematics Subject Classification.} Primary: 32S25, 13C14, 14J70;}
\thanks{Secondary: 14C20, 14H51, 14B05.}
\thanks{{\it Key words and phrases.} Free divisor, Saito matrix, syzygy}
\author[Ramakrishna Nanduri]{Ramakrishna Nanduri$^\dag$}
\address{Dipartimento di Matematica, Universit\'{a} Degli Studi di Genova, Via Dodecaneso 35, I-16146, Italia.}
\email{nandurirk@gmail.com}

\begin{abstract}
An infinite family of irreducible homogeneous free divisors in 
$K[x,y,z]$ is constructed. Indeed, we identify sets of monomials 
$X$ such that the general polynomial supported on $X$ is a free 
divisor. 
\end{abstract}

\maketitle
\section{Introduction}
 The goal of this note is to construct a 
 family of irreducible homogeneous free divisors in 
 $K[x,y,z]$, where $K$ is any field. 

 The notion of free divisors appeared in investigations of discriminants of versal unfoldings of 
 isolated hypersurface singularities and they were formally defined and investigated by Saito, \cite{si80}. 
 The notion of a free divisor has played a fundamental role for understanding the structure of nonisolated 
 singularities such as discriminants, hyperplane arrangements etc, see \cite{br85}, \cite{beg09}, 
 \cite{da98}, \cite{da03}, \cite{da06}, \cite{ot92}, \cite{si80}, \cite{sc12}, \cite{te80}, \cite{te81a}, 
 \cite{te81b}, \cite{te83}, \cite{vs95}, \cite{y05}, to cite a few. 
 Many authors have established a number of natural constructions which yield free divisors, 
 for example see \cite{br85}, \cite{bc12}, \cite{da98}, \cite{dm91}, 
 \cite{gms11}, \cite{si80}, \cite{se09}, \cite{si05}, \cite{st09}, \cite{st12}, 
 \cite{te80}, \cite{te83}, \cite{t12} among others. 
 \vskip 2mm
 Here we give the definition of free divisors in the polynomial ring $K[x_1,\ldots, x_n]$. 
\begin{definition}
 A (formal) free divisor is a reduced polynomial 
 $F \in K[x_1,\ldots,x_n]$ such that its Jacobian ideal 
 $J(F) = (\frac{\partial F}{\partial x_1},\ldots,\frac{\partial F}{\partial x_n}, F)$
 is perfect of codimension $2$ in $K[x_1,\ldots,x_n]$. 
\end{definition}
If $F$ is a weighted homogeneous polynomial with respect to a weight 
 $w= (w_1,\ldots,w_n) \in \ZZ^n$ of positive $w$-degree, in $K[x_1,\ldots,x_n]$, then the Euler's 
equation, $w_1x_1 \frac{\partial F}{\partial x_1}+ \cdots+w_nx_n \frac{\partial F}{\partial x_n} = deg_w(F)F$, 
implies that $F \in (\frac{\partial F}{\partial x_1},\ldots, \frac{\partial F}{\partial x_n})$. 
Note that in $K[x,y]$, every reduced homogeneous polynomial is a free divisor. 
 The irreducible homogeneous free divisors are fascinating objects to study 
 and hard to find them. We list some of the known irreducible homogeneous free 
 divisors in the literature. In \cite{da02}, J. Damon gave a criterion for an 
 equisingular deformation of an isolated curve singularity of certain form, to 
 be a free divisor and by using this criterion, he showed that the Hessian 
 deformation defines a free divisor for nonsimple weighted homogeneous singularities. 
  In \cite{sms06}, A. Simis constructed a family of irreducible homogeneous 
 free divisors of degree $6$ in $K[x,y,z]$ called {\it Cayley sextics}, by 
 studying the depth of the Jacobian ideal, \cite[Proposition 4.4]{sms06}. 
 The discriminant polynomial of the generic binary form, is a homogeneous 
 free divisor, see \cite{ac07}. Granger, Mond and Schulze studied the linear 
 free divisors, which arising as discriminants, see \cite{gms11}. 
  Later Simis and Tohaneanu constructed a family of irreducible 
homogeneous free divisors in $K[x,y,z]$ of degree at least $5$, 
\cite[Proposition 2.2]{st12}. Also \cite{ns11}, \cite{st09}, \cite{t12}  
and \cite{si05} give some more insights on free divisors 
in three variables. Recently R.-O. Buchweitz and A. Conca 
studied the free divisors annihilated by many Euler vector fields and 
classification of binomial quasi-homogeneous free divisors, see \cite{bc12}. 
We study the following question. Describe sets of monomials 
$X$ in $K[x,y,z]$ of the same degree such that the sum of 
monomials in $X$ with general scalar coefficients, is a 
free divisor. 

 Motivated by Simis-Tohaneanu example, in this note we construct a 
family of irreducible free divisors in $\mathbb{P}^2$. For any 
$d\geq 5$, we show that 
$$x^{d-\alpha} F_1(x,y) + y^{\lfloor \frac{d}{2} \rfloor+\alpha+1} F_2(x,y) + x^{\beta}y^{d-\beta-1}z$$ 
  is an irreducible homogeneous free divisor in $K[x,y,z]$, 
  if $0 \leq \alpha, \beta \leq \lfloor\frac{d+1}{2}\rfloor-3$,  
  $0 \leq \alpha+\beta \leq \lfloor\frac{d+1}{2}\rfloor-3$ 
  and any homogeneous polynomials $F_1, F_2 \in K[x,y]$ 
  of degrees $\alpha$ and $(d-\lfloor \frac{d}{2} \rfloor-\alpha-1)$ 
  respectively such that $F_1$ is square-free, $x \nmid F_1F_2$ and $y \nmid F_1F_2$, 
see Theorem \ref{mainthm} below. This family of free 
divisors is larger than that of Simis-Tohaneanu's 
family. We prove our main theorem by describing the Saito matrix and the 
 structure of syzygies of the Jacobian ideal. 
\vskip 1mm
\noindent
{\bf Acknowledgement:} 
The author is grateful to Professor Aldo Conca for several useful 
discussions and suggestions on the content of this article. 
The author is thankful to Professor Aron Simis for his comments 
on the proof of the main theorem.  Also the author likes to sincerely 
thank INdAM-Cofunded by Marie Curie Actions, for their 
fellowship and the Department of Mathematics, University 
of Genova for their hospitality. 
\section{Main Results}
 For any real number $r$, let $\lfloor r \rfloor$ denote the 
 largest integer less than or equal to $r$ and $\lceil r \rceil$ 
 denote the smallest integer bigger than or equal to $r$. 
 For any polynomial $F \in K[x_1,\ldots,x_n]$, the vector 
$\nabla F = (\frac{\partial F}{\partial x_1},\ldots,\frac{\partial F}{\partial x_n})$, 
 is called the {\it gradient} of $F$. 
  We state a theorem of 
 K. Saito on free divisors, for the polynomial algebra $K[x_1,\ldots,x_n]$: 
\begin{theorem}[Saito's criterion, \cite{si80}] \label{saito80}
 A reduced polynomial $F \in K[x_1,\ldots,x_n]$ is a free divisor 
 if and only if there exists a $n\times n$ 
 matrix $A$ with entries in $K[x_1,\ldots,x_n]$ such that 
$\det(A) = F$ and $(\nabla F) A \equiv 0$ mod $(F)$.
\end{theorem}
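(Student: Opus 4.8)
The plan is to route the whole argument through the module of logarithmic derivations $D=\{(a_1,\ldots,a_n)\in R^n : \sum_i a_i\frac{\partial F}{\partial x_i}\in (F)\}$, where $R=K[x_1,\ldots,x_n]$ and $J=J(F)$, and to read the matrix $A$ off from a basis of $D$. Two short exact sequences drive everything:
$$0\to D\to R^n\xrightarrow{\ \mu\ } J/(F)\to 0,\qquad 0\to J/(F)\to R/(F)\to R/J\to 0,$$
where $\mu(a_1,\ldots,a_n)=\sum_i a_i\frac{\partial F}{\partial x_i}\bmod (F)$ has image exactly $J/(F)$ because $J=(F)+(\frac{\partial F}{\partial x_i})$. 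I would split the criterion into two halves: the homological equivalence ``$F$ is a free divisor $\iff$ $D$ is a free $R$-module of rank $n$'', and the identification of the determinant of the coefficient matrix of a basis of $D$ with $F$ up to a nonzero constant.

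For the first half I would run the Auslander--Buchsbaum formula and the standard projective-dimension estimates for a short exact sequence along the two sequences above. If $J$ is perfect of codimension $2$ then $\pd(R/J)=2$ and $\pd(R/(F))=1$, so the second sequence forces $\pd(J/(F))\le 1$ and then the first forces $\pd D=0$; a finitely generated graded projective $R$-module is free, and additivity of rank across the first sequence (note $J/(F)$ is $F$-torsion, hence of rank $0$) gives $\operatorname{rank}D=n$. Conversely, if $D$ is free of rank $n$ the same estimates run backwards to yield $\pd(R/J)\le 2$; since $F$ is reduced its singular locus has codimension $\ge 2$, so $\grade J\ge 2$, and $\grade J\le \pd(R/J)\le 2$ pins both invariants to $2$, i.e. $J$ is perfect of codimension $2$.

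Given a homogeneous basis $\delta_1,\ldots,\delta_n$ of $D$ with $\delta_j=\sum_i a_{ij}\frac{\partial}{\partial x_i}$, set $A=(a_{ij})$. Each column lies in $D$, which is precisely $(\nabla F)A\equiv 0\bmod (F)$, and the inclusion $D\hookrightarrow R^n$ is represented by $A$, so $J/(F)\cong\operatorname{coker}(A)$ and the zeroth Fitting ideal gives $(\det A)=\mathrm{Fitt}_0(J/(F))$. The support of $J/(F)$ is all of $V(F)$ (at a smooth point of $V(F)$ some $\frac{\partial F}{\partial x_i}$ is a unit, so $J$ is the unit ideal there while $J/(F)\ne 0$), whence $\sqrt{(\det A)}=\sqrt{(F)}=(F)$. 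To upgrade this to $\det A=cF$ I would localize at the height-one prime $(F_i)$ of each irreducible factor $F_i$ of $F$: there $R/(F)$ has length $1$ and $R/J$ vanishes (the generic point of a component of a reduced hypersurface is smooth), so the second sequence gives $\operatorname{length}(J/(F))_{(F_i)}=1$, and over the resulting discrete valuation ring this length equals the $F_i$-order of $\det A$; hence each factor occurs to order one and $\det A=cF$ with $c\in K^\times$, and rescaling one column gives $\det A=F$. This determinant identification is the crux, and the main obstacle I anticipate: the homological half is routine bookkeeping with $\pd$, whereas forcing $\det A$ to equal $F$ exactly, rather than merely sharing its radical, is exactly where reducedness must be used decisively, through the length-one computation at each codimension-one prime.

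For the converse implication (the form actually used later), suppose $A$ satisfies $\det A=F$ and $(\nabla F)A\equiv 0\bmod (F)$. Its columns lie in $D$ and are $R$-independent since $\det A=F\ne 0$; I would show they generate $D$ by Cramer's rule. For $\delta=(b_i)\in D$ let $A^{(j)}$ be $A$ with its $j$-th column replaced by $(b_i)$; all columns of $A^{(j)}$ are logarithmic, so $(\nabla F)A^{(j)}\equiv 0\bmod (F)$, and multiplying by the adjugate yields $(\det A^{(j)})\,\frac{\partial F}{\partial x_k}\in (F)$ for every $k$. Since $F$ is reduced, no irreducible factor of $F$ divides all the partials (otherwise $\grade J=1$), so $F\mid\det A^{(j)}$ and $h_j=\det A^{(j)}/F\in R$; thus $\delta=\sum_j h_j\delta_j$, so $D$ is free of rank $n$, and by the first half $F$ is a free divisor.
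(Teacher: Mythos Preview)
The paper does not prove Theorem~\ref{saito80}: it is quoted from Saito's original article \cite{si80} and invoked as a black box, so there is no in-paper argument to compare against.

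That said, your proof is correct and is essentially the standard modern treatment. The two short exact sequences you set up are the right framework, and the projective-dimension bookkeeping for the equivalence ``$J(F)$ perfect of codimension $2$ $\Longleftrightarrow D$ free of rank $n$'' is clean and goes through as written. For the determinant identification your Fitting-ideal plus localization argument is the correct one; note that the radical step is actually redundant once you localize at every height-one prime (primes not dividing $F$ give length $0$, primes $F_i\mid F$ give length $1$, so $\det A$ already has exactly the factorization of $F$). The Cramer's-rule half of the converse is also standard and you have executed it correctly: the key input, that no irreducible factor of a reduced $F$ divides every partial, is precisely the statement that the singular locus has codimension at least $2$, which you invoke (the parenthetical ``otherwise $\grade J=1$'' is slightly loose phrasing, but the underlying reasoning is right).

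One small caveat: when you pass from $\pd D=0$ to ``$D$ is free'' you appeal to the graded case. For an inhomogeneous $F$ this step needs Quillen--Suslin. Since the paper only uses the criterion for homogeneous $F$ in $K[x,y,z]$, the graded argument suffices for everything that follows.
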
 
 The matrix appearing in the Saito's criterion, is called 
 a {\it discriminant matrix} or {\it Saito matrix} of the 
 polynomial. \\
 Now we prove a lemma which is useful in the proof of the 
 main theorem. 
 \begin{lemma} \label{lem1}
Let $d \geq 1$ be an integer and $v =\lfloor \frac{d}{2} \rfloor$. Let $\mu \in K$. 
Let $\alpha, \beta \geq 0$ be integers such that 
$\alpha + \beta \leq v$. 
Suppose $F_1, F_2 \in K[x,y]$ are homogeneous polynomials of degrees 
$\alpha, v-\alpha$ respectively. Assume $F_1$ is square-free, $x \nmid F_1F_2$ and $y \nmid F_1F_2$. Then there exist  
homogeneous polynomials $H_1, H_3, H_5 \in K[x,y]$ of 
degree $v$ satisfying \\ 
\begin{equation} \label{eq1}
\begin{bmatrix}
 -G_2 &  xG_1  & 0 \\~\\
 y\frac{\partial F_2}{\partial x}   &      ( y\frac{\partial F_2}{\partial y}+(d-v+\alpha)F_2 )    &  x^{\beta}y^{v-\alpha-\beta}\\
\end{bmatrix}  
\begin{bmatrix}
H_1 \\
H_3 \\
H_5
\end{bmatrix} 
=
\begin{bmatrix}
 \mu y^{v+\alpha} \\
 -\mu x^{2v-\alpha} 
\end{bmatrix}
\end{equation}
where $G_1 = \frac{\partial F_1}{\partial y}$ and $G_2 = -\left(x\frac{\partial F_1}{\partial x}+(d-\alpha)F_1\right)$.  
\end{lemma}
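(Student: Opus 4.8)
The plan is to treat the asserted matrix equation as a question about a single $K$-linear map between finite-dimensional graded pieces and to prove that this map is surjective by an exact dimension count, with essentially all of the difficulty concentrated in one coprimality statement. First I would rewrite the entries using Euler's relations. Writing $G_1=\partial F_1/\partial y$, Euler's formula $x\,\partial F_1/\partial x+y\,\partial F_1/\partial y=\alpha F_1$ gives $G_2=yG_1-dF_1$, and the corresponding relation for $F_2$ turns the $(2,2)$ entry $y\,\partial F_2/\partial y+(d-v+\alpha)F_2$ into $dF_2-x\,\partial F_2/\partial x$. A check of degrees shows that all entries of the first (resp. second) row have degree $\alpha$ (resp. $v$), so that for $H_1,H_3,H_5$ homogeneous of degree $v$ the left-hand side lands in $K[x,y]_{v+\alpha}\oplus K[x,y]_{2v-\alpha}$, matching the right-hand side. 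Since $F_1$ is square-free with $x\nmid F_1$, no linear factor of $F_1$ divides $G_1$, so $\gcd(F_1,G_1)=1$; combined with $G_2=yG_1-dF_1$ and $x\nmid F_1F_2$ this yields $\gcd(G_2,xG_1)=1$ (one checks $x\nmid G_2$ by reducing modulo $x$ and using $\alpha<d$).

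Next I would view the matrix as the $K$-linear map
\[
\Phi\colon K[x,y]_v^{\oplus 3}\longrightarrow K[x,y]_{v+\alpha}\oplus K[x,y]_{2v-\alpha},
\]
and observe that $\dim_K$ of the source is $3(v+1)=3v+3$ while $\dim_K$ of the target is $(v+\alpha+1)+(2v-\alpha+1)=3v+2$. Hence the lemma (solvability for the particular right-hand side, indeed for every right-hand side) follows once I show that $\Phi$ is surjective, and by the rank--nullity theorem this is equivalent to proving $\dim_K\ker\Phi=1$.

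To compute $\ker\Phi$, note that a kernel element satisfies $-G_2H_1+xG_1H_3=0$; since $\gcd(G_2,xG_1)=1$ this forces $(H_1,H_3)=s\,(xG_1,G_2)$ for a unique $s\in K[x,y]_{v-\alpha}$ (the Koszul syzygy). Substituting into the second row produces $s\Lambda+x^{\beta}y^{v-\alpha-\beta}H_5=0$, where $\Lambda$ is the image of $(xG_1,G_2)$ under the second row; a short computation using the Euler relations gives $\Lambda=d\bigl(yF_2G_1+xF_1\,\partial F_2/\partial x-dF_1F_2\bigr)$, of degree $v$. Reducing modulo $x$ and modulo $y$ and using $x\nmid F_1F_2$, $y\nmid F_1F_2$ together with $\alpha<d$ and $v-\alpha<d$, I find $\Lambda\equiv(\text{nonzero scalar})\,y^{v}\pmod x$ and $\Lambda\equiv(\text{nonzero scalar})\,x^{v}\pmod y$, so $\gcd(\Lambda,x)=\gcd(\Lambda,y)=1$. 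Consequently $x^{\beta}y^{v-\alpha-\beta}\mid s\Lambda$ forces $x^{\beta}y^{v-\alpha-\beta}\mid s$; as $\deg s=v-\alpha=\deg\bigl(x^{\beta}y^{v-\alpha-\beta}\bigr)$ (here $v-\alpha-\beta\ge 0$ by $\alpha+\beta\le v$), $s$ must be a scalar multiple of $x^{\beta}y^{v-\alpha-\beta}$, and then $H_5=-\Lambda$ is determined. Thus $\ker\Phi$ is spanned by the single nonzero vector $(x^{\beta+1}y^{v-\alpha-\beta}G_1,\,x^{\beta}y^{v-\alpha-\beta}G_2,\,-\Lambda)$, so $\dim_K\ker\Phi=1$ and $\Phi$ is surjective, which proves the claim.

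I expect the main obstacle to be the coprimality $\gcd(\Lambda,x)=\gcd(\Lambda,y)=1$: this is exactly what prevents the kernel from being larger and hence forces surjectivity, and it is where every hypothesis ($F_1$ square-free, $x\nmid F_1F_2$, $y\nmid F_1F_2$, and the numerical bound $\alpha+\beta\le v$) is used. A secondary point to treat carefully is the degenerate case $\alpha=0$, in which $G_1=0$ and the Koszul description of the first-row syzygies degenerates; there one argues directly that $H_1$ is determined and that the pair $(H_3,H_5)$ contributes a one-dimensional kernel by the same coprimality of the $(2,2)$ entry with $x$ and $y$. Finally, the Euler-relation manipulations and the nonvanishing of the scalars above require the relevant integers ($d$, $d-\alpha$, $d-v+\alpha$) to be invertible in $K$, which I would either assume on $K$ or incorporate as a hypothesis.
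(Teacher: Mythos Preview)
Your argument is correct and follows essentially the same route as the paper: both show that $xG_1,G_2$ form a regular sequence, identify the single syzygy generator $(x^{\beta+1}y^{v-\alpha-\beta}G_1,\,x^{\beta}y^{v-\alpha-\beta}G_2,\,-\Lambda)$ via the coprimality of $\Lambda$ with $x$ and $y$, and then conclude by a dimension count---the paper packaging this as a Hilbert-series computation showing the cokernel vanishes in degrees $\ge 2v-1$, you via rank--nullity directly in degree~$v$. One minor slip: the second-row entries have degree $v-\alpha$, not $v$, though your target $K[x,y]_{2v-\alpha}$ is correct, so the argument is unaffected.
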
 
\begin{proof}
 Let $S=K[x,y]$ and $N$ be the graded $S$-submodule of $S(-(v-\alpha))\oplus S(-\alpha)$ 
 generated by the columns of the coefficient matrix in \eqref{eq1}. 
 That is, $N$ is generated by 
 $\begin{bmatrix}
 -G_2  \\
 y\frac{\partial F_2}{\partial x} 
\end{bmatrix}$, 
$\begin{bmatrix}
   xG_1 \\
  y\frac{\partial F_2}{\partial y}+(d-v+\alpha)F_2    
\end{bmatrix}$ \mbox{ and }
$\begin{bmatrix}
  0 \\
  x^{\beta}y^{v-\alpha-\beta} 
\end{bmatrix} 
$. Let 
$\begin{bmatrix}
 t_1  \\
 t_2 \\
 t_3
\end{bmatrix}$ be a syzygy of $N$. Then we have 
 $t_1$ $\begin{bmatrix}
 -G_2  \\
 y\frac{\partial F_2}{\partial x} 
\end{bmatrix}$ $+ t_2$ 
$\begin{bmatrix}
   xG_1 \\
  y\frac{\partial F_2}{\partial y}+(d-v+\alpha)F_2    
\end{bmatrix}$ $+ t_3$
$\begin{bmatrix}
  0 \\
  x^{\beta}y^{v-\alpha-\beta} 
\end{bmatrix} = 0$. This implies that 
$-G_2t_1+xG_1t_2=0$ and 
$t_1y\frac{\partial F_2}{\partial x}+t_2\left(y\frac{\partial F_2}{\partial y}+(d-v+\alpha)F_2\right)+t_3x^{\beta}y^{v-\alpha-\beta}=0.$ 
Now we show that $xG_1, G_2$ is a regular sequence. It is enough to show that ht$(xG_1,G_2) = 2$. 
For $P \subset K[x,y]$ any prime ideal such that $(xG_1,G_2) \subseteq P$, 
we show that $P=(x,y)$. Since $(xG_1,G_2) \subseteq P$, we have  
$(x,G_2) \subseteq P$ or $(G_1,G_2) \subseteq P$. By using the assumption that $x \nmid F_1$, we have 
$(x,G_2)=(x,y^{\alpha})$, is a complete intersection of height $2$. Thus if $(x,G_2) \subseteq P$, then 
$P =(x,y)$, as required. Now assume that $(G_1,G_2) \subseteq P$. By using Euler's equation for $F_1$ we have 
$(G_1,G_2) = (\frac{\partial F_1}{\partial y}, x\frac{\partial F_1}{\partial x})$. Then we have 
 $(\frac{\partial F_1}{\partial y},x) \subseteq P$ or 
$(\frac{\partial F_1}{\partial y}, \frac{\partial F_1}{\partial x}) \subseteq P$. 
By using the assumption that $x \nmid F_1$, we have $(x, \frac{\partial F_1}{\partial y})= (x,y^{\alpha})$, is a complete 
intersection of height $2$. Thus if $(\frac{\partial F_1}{\partial y},x) \subseteq P$, then $P=(x,y)$. 
Now by the assumption that $F_1$ is square-free, we have $\frac{\partial F_1}{\partial x}, \frac{\partial F_1}{\partial y}$ 
is a regular sequence of height $2$. Thus if $(\frac{\partial F_1}{\partial y}, \frac{\partial F_1}{\partial x}) \subseteq P$, 
then $P=(x,y)$, as required. Thus in any case, $P=(x,y)$. 
Therefore $xG_1, G_2$ is a regular sequence. Then $-G_2t_1+xG_1t_2=0$, implies that 
$t_1=xG_1 \omega, t_2=G_2\omega$, for some $\omega \in K[x,y]$. 
Now $xy\omega G_1\frac{\partial F_2}{\partial x}
+\omega G_2 \left(y\frac{\partial F_2}{\partial y}+(d-v+\alpha)F_2\right)+t_3x^{\beta}y^{v-\alpha-\beta}=0$
implies that 
$$\omega \left[xyG_1\frac{\partial F_2}{\partial x}
+G_2 \left(y\frac{\partial F_2}{\partial y}+(d-v+\alpha)F_2\right)\right]+t_3x^{\beta}y^{v-\alpha-\beta}=0 $$
Now consider 
\begin{eqnarray*}
\left[xyG_1\frac{\partial F_2}{\partial x}
+G_2 \left(y\frac{\partial F_2}{\partial y}+(d-v+\alpha)F_2\right)\right] 
&=& xy\frac{\partial F_1}{\partial y}\frac{\partial F_2}{\partial x}-xy\frac{\partial F_1}{\partial x}\frac{\partial F_2}{\partial y} \\
&& -(d-\alpha)yF_1\frac{\partial F_2}{\partial y}-(d-v+\alpha)x\frac{\partial F_1}{\partial x}F_2 \\
&&  -(d-\alpha)(d-v+\alpha)F_1F_2.
\end{eqnarray*}
By the assumptions that $x\nmid F_1F_2$ and $y \nmid F_1F_2$, we have both $x$ and $y$ not divide 
$xyG_1\frac{\partial F_2}{\partial x}+G_2 \left(y\frac{\partial F_2}{\partial y}+(d-v+\alpha)F_2\right)$. Therefore 
$xyG_1\frac{\partial F_2}{\partial x}+G_2 \left(y\frac{\partial F_2}{\partial y}+(d-v+\alpha)F_2\right), x^{\beta}y^{v-\alpha-\beta}$ 
is a regular sequence. This implies that $t_3= -\omega_1\left[xyG_1\frac{\partial F_2}{\partial x}+G_2 \left(y\frac{\partial F_2}{\partial y}
+(d-v+\alpha)F_2\right) \right]$ and $\omega = x^{\beta}y^{v-\alpha-\beta} \omega_1$ for some $\omega_1 \in K[x,y]$.
Thus
$$\begin{bmatrix}
 t_1  \\
 t_2 \\
 t_3
\end{bmatrix} = \omega_1 
\begin{bmatrix}
 x^{\beta+1}y^{v-\alpha-\beta}G_1  \\
 x^{\beta}y^{v-\alpha-\beta} G_2 \\
 -xy\frac{\partial F_2}{\partial x}G_1 - G_2\left((d-v+\alpha)F_2+y \frac{\partial F_2}{\partial y} \right)
\end{bmatrix}
$$ for some $\omega_1 \in K[x,y]$. Therefore 
$\begin{bmatrix}
 \mbox{\tiny $x^{\beta+1}y^{v-\alpha-\beta}G_1$} \\
 \mbox{\tiny $x^{\beta}y^{v-\alpha-\beta} G_2$} \\
 \mbox{\tiny $-xy\frac{\partial F_2}{\partial x}G_1 - G_2\left((d-v+\alpha)F_2+y \frac{\partial F_2}{\partial y} \right)$}
\end{bmatrix}$
is the only generator of the syzygy module of $N$ and of degree $v$. 
Hence the minimal graded free resolution of $N$ is of the form: 
$$0 \longrightarrow S(-2v) \longrightarrow S^3(-v) \longrightarrow S(-(v-\alpha))\oplus S(-\alpha) \longrightarrow 0.$$
Where the first map is given by the matrix 
$\begin{bmatrix}
 \mbox{\tiny $x^{\beta+1}y^{v-\alpha-\beta}G_1$} \\
 \mbox{\tiny $x^{\beta}y^{v-\alpha-\beta} G_2$} \\
 \mbox{\tiny $-xy\frac{\partial F_2}{\partial x}G_1 - G_2\left((d-v+\alpha)F_2+y \frac{\partial F_2}{\partial y} \right)$}
\end{bmatrix}$. 
Therefore the Hilbert series of $S(-(v-\alpha))\oplus S(-\alpha)/N$ is  
$$\frac{z^{\alpha}+z^{v-\alpha}-3z^v+z^{2v}}{(1-z)^2}.$$
 This is equal to the polynomial 
 $$z^{\alpha}(1+z+\cdots+z^{v-\alpha-1})^2+z^{v-\alpha}(1+z+\cdots+z^{\alpha-1})(1+z+\cdots+z^{v-1}),$$
 of degree $2v-2$. This implies that 
$\left[ S(-(v-\alpha))\oplus S(-\alpha)/N \right]_i = 0$, for all $i \geq 2v-1$. Since 
the degree of $\begin{bmatrix}
 \mu y^{v+\alpha} \\
 -\mu x^{2v-\alpha} 
\end{bmatrix}$, is $2v$ in $S(-(v-\alpha))\oplus S(-\alpha)$, therefore 
the system \eqref{eq1} has a solution. 
\end{proof}
\noindent 
Now we prove the main theorem. 
\begin{theorem} \label{mainthm}
 Let $d \geq 5$ be an integer and $v =\lfloor \frac{d}{2} \rfloor$. 
 Then the homogeneous polynomial of degree $d$
  $$F=x^{d-\alpha} F_1(x,y) + y^{v+\alpha+1} F_2(x,y) + x^{\beta}y^{d-\beta-1}z$$ 
  is an irreducible  free divisor in $K[x,y,z]$ provided: 
  \begin{itemize}
 \item[(1)]  $\alpha, \beta \geq 0$ and $0 \leq \alpha+\beta \leq \lfloor\frac{d+1}{2}\rfloor-3$. 
 \item[(2)]  $F_1(x,y)$  is square-free, homogeneous of degree $\alpha$ and $x \nmid F_1, y \nmid F_1$.
 \item[(3)]  $F_2(x,y)$ is homogeneous of degree $d-v-\alpha-1$ and $x \nmid F_2, y \nmid F_2$.
\end{itemize}  
\end{theorem}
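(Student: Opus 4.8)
The plan is to verify Saito's criterion (Theorem \ref{saito80}) by exhibiting an explicit $3\times 3$ matrix $A$ over $K[x,y,z]$ whose columns are logarithmic derivations of $F$ and whose determinant is $F$ up to a nonzero scalar. Since $F$ is homogeneous, one column will be the Euler derivation $(x,y,z)^{T}$, for which $xF_x+yF_y+zF_z=dF\equiv 0\pmod F$. The remaining two columns will be homogeneous derivations $\theta_2,\theta_3$ with $\theta_i(F)=0$, of coefficient-degrees $v$ and $d-1-v$ respectively, so that the three column-degrees sum to $1+v+(d-1-v)=d=\deg F$. Saito's criterion also requires $F$ reduced, which I would obtain for free from irreducibility.

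For irreducibility I would exploit that $F$ is linear in $z$: write $F=G+Hz$ with $G=x^{d-\alpha}F_1+y^{v+\alpha+1}F_2$ and $H=x^{\beta}y^{d-\beta-1}$. Over the UFD $K[x,y]$ a degree-one polynomial in $z$ is irreducible exactly when its content is a unit, i.e. $\gcd(G,H)=1$; as $H$ is a monomial this amounts to $x\nmid G$ and $y\nmid G$. Reducing $G$ modulo $x$ and modulo $y$ gives $G\equiv y^{v+\alpha+1}F_2\pmod x$ and $G\equiv x^{d-\alpha}F_1\pmod y$, so the hypotheses $x\nmid F_2$ and $y\nmid F_1$ force $\gcd(G,H)=1$. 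Hence $F$ is irreducible, and in particular reduced.

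The heart of the proof is the construction of $\theta_2,\theta_3$, and this is where Lemma \ref{lem1} enters. Because $F$ is linear in $z$ and $F_z=x^{\beta}y^{d-\beta-1}$ is a monomial, I would seek derivations whose coefficients are linear in $z$, say $\theta=(H_1+H_2z)\partial_x+(H_3+H_4z)\partial_y+(H_5+H_6z)\partial_z$ with $H_i\in K[x,y]$, and expand $\theta(F)$ as a polynomial in $z$. The top ($z^2$) coefficient is a syzygy of $(\partial_x F_z,\partial_y F_z)$ and forces $(H_2,H_4)$ to be a Koszul multiple of $(\,(d-\beta-1)x,\,-\beta y\,)$; the bottom ($z^0$) coefficient, after separating the $x^{d-\alpha}$-graded part (governed by $G_1,G_2$) from the $y^{v+\alpha+1}$-graded part (governed by the derivatives of $F_2$) and the contribution of $F_z$, is exactly the two-row system \eqref{eq1} of Lemma \ref{lem1}. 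The carefully chosen right-hand side $\mu(y^{v+\alpha},-x^{2v-\alpha})^{T}$ is precisely what makes the $x^{d-\alpha}$-graded and $y^{v+\alpha+1}$-graded pieces cancel (using $d-\alpha-1=2v-\alpha$ for odd $d$) rather than merely be individually divisible by $F_z$, and the remaining ($z^1$) coefficient then fixes the last components. Thus the solvability guaranteed by Lemma \ref{lem1}---whose proof rests on the regular-sequence and Hilbert-series computation showing the relevant module vanishes in degrees $\ge 2v-1$---produces the degree-$v$ derivation, and an analogous system (with parameters adjusted to the parity of $d$, since the coefficient $d-v+\alpha$ in \eqref{eq1} equals $v+\alpha+1$ only when $d$ is odd) produces the second derivation of degree $d-1-v$.

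Finally I would assemble $A$ from the Euler derivation and $\theta_2,\theta_3$ and show $\det A=cF$ with $c\in K^{*}$. Since every column is logarithmic, the standard Saito-determinant fact gives $F\mid\det A$, and $\det A$ is homogeneous of degree $1+v+(d-1-v)=d=\deg F$, so $\det A=cF$ for a constant $c$; the only remaining point is $c\neq 0$, equivalently the $K(x,y,z)$-linear independence of the three derivations. This nondegeneracy is the main obstacle: it is where the strict bound $\alpha+\beta\le\lfloor\tfrac{d+1}{2}\rfloor-3$ is used, guaranteeing that $\theta_2,\theta_3$ really occur in the minimal degrees $v$ and $d-1-v$ (no accidental lower-degree syzygy collapses the determinant) and that $F_2$ has the required nonnegative degree. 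I would pin down $c\neq0$ by extracting one monomial of $\det A$---most cleanly the pure $z$-monomial coming from the $F_z$-entry---or by specializing $F_1,F_2$ to a convenient square-free choice and checking the determinant there, independence being an open condition. With $\det A=cF$, $c\neq0$, and $(\nabla F)A\equiv0\pmod F$, Saito's criterion yields that $F$ is a free divisor; combined with irreducibility this proves Theorem \ref{mainthm}.
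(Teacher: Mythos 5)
Your skeleton is the paper's: verify Saito's criterion with the Euler column $(x,y,z)^{T}$ plus two further logarithmic derivations of coefficient-degrees $v$ and $d-v-1$, and use Lemma \ref{lem1} to solve the $z^{0}$-part of the syzygy equation. Your irreducibility argument (content of a $z$-linear polynomial over $K[x,y]$) is correct and in fact more detailed than the paper's one-line assertion, and you have correctly identified why the right-hand side $\mu(y^{v+\alpha},-x^{2v-\alpha})^{T}$ of \eqref{eq1} is the one that makes the $x^{d-\alpha-1}$- and $y^{v+\alpha}$-parts cancel.

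There are, however, two genuine gaps. First, your ansatz forces the wrong shape for the derivations: you take all three coefficients linear in $z$, so the $z^{2}$-coefficient of $\theta(F)$ is $H_2\partial_x F_z+H_4\partial_y F_z$ and $(H_2,H_4)$ must be a Koszul multiple of $((d-\beta-1)x,-\beta y)$. The paper instead puts a $z^{2}$-term into the $\partial_z$-coefficient (its third column ends in $H_5+H_6z-Mz^{2}(\beta yH_2+(d-\beta-1)H_4)$), precisely so that the $z^{2}$-coefficient cancels with $(H_2,H_4)=(G_1E,G_2E)$, which is \emph{not} a Koszul multiple of $((d-\beta-1)x,-\beta y)$ in general; it is not clear that your more rigid ansatz leaves the $z^{1}$-equation solvable. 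Moreover the second derivation (the paper's degree-$(d-v-1)$ column, written out explicitly with the polynomial $G_3$ and verified by direct computation, not via Lemma \ref{lem1}) is not constructed in your outline at all. Second, and more seriously, the nonvanishing of $c$ in $\det A=cF$ is the crux of the whole theorem, and neither of your fallbacks settles it: specializing $F_1,F_2$ and invoking openness would only prove the statement for \emph{general} $F_1,F_2$, not for every pair satisfying (1)--(3); and extracting the pure $z$-monomial is exactly the computation the paper must do (the coefficient of $z$ in $d\mu\det A$ is shown to equal $\mu d\,x^{\beta}y^{2v-\beta}$ only after substituting the system \eqref{eq1}, Euler's relation, and the identity $G_1H_4=G_2H_2$) --- it is not a one-line coefficient read-off and cannot be performed without the explicit entries you have not written down. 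As it stands the proposal is a correct plan whose hardest steps remain to be executed.
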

In the proof of the Theorem we will show also how to construct 
the Saito matrix of $F$. For instance, for  $\beta \neq 0$ we will 
show that the Saito matrix of $F$ has the following shape: 
  \[
      \frac{1}{d\mu }   
            \left[ {\begin{array}{lll}
             x & \mbox{\tiny $x^{\beta+1}y^{\gamma +2}\frac{\partial F_1}{\partial y}$}             
             &  \mbox{\tiny $H_1+x^{\beta}y^{\gamma +1}z\frac{\partial F_1}{\partial y}E$}\\  \\
             y & \mbox{\tiny $-x^{\beta}y^{\gamma +2}\left(x\frac{\partial F_1}{\partial x}+(d-\alpha)F_1\right)$}     
             & \mbox{\tiny $H_3-x^{\beta-1}y^{\gamma +1}z\left(x\frac{\partial F_1}{\partial x}+(d-\alpha)F_1\right)E$} \\  \\
             z & \mbox{\tiny $G_3-x^{\beta}y^{\gamma +1}zW$} 
               & \mbox{\tiny $H_5+H_6z-x^{\beta-1}y^{\gamma}z^2EW$} \\
                \end{array} } \right],
        \]
 
\noindent 
where 
\begin{itemize}
\item[(i)] $\gamma = d-v-3-\alpha-\beta$,  
\item[(ii)] $H_1, H_3, H_5 \in K[x,y]$ are  homogeneous of degree $v$,  
$H_6 \in K[x,y]$ is homogeneous of degree $v-1$,  
$E \in K[x,y]$ is linear if $d$ is odd and quadratic if $d$ is even. 
The nature of these polynomials is  described in the proof. 
\item[(iii)]  $W = \beta y \frac{\partial F_1}{\partial y}-(d-\beta-1)\left(x\frac{\partial F_1}{\partial x}+(d-\alpha)F_1\right)$, \\
\end{itemize} 
and 
$$G_3 = \left\{
 \begin{array}{ll}
\begin{vmatrix}
1 & -(\frac{2d-v}{\alpha})y \frac{\partial F_1}{\partial y}  &  \frac{d(v+\alpha+1)}{\alpha(d-\alpha)}x\frac{\partial F_1}{\partial x}\\~\\
1 & \frac{d(2d-v)}{\alpha(d-\alpha)}x\frac{\partial F_1}{\partial x} & -\frac{d-\alpha}{\alpha}y\frac{\partial F_1}{\partial y} \\~\\
0 & x\frac{\partial F_2}{\partial x}                                 & y\frac{\partial F_2}{\partial y}
\end{vmatrix}, & \mbox{ if } \alpha \neq 0, \\ ~~ \\
-d \left( y\frac{\partial F_2}{\partial y}+(d-v)F_2 \right) F_1, & \mbox{ if } \alpha = 0. 
\end{array}
\right.
$$
\begin{proof}
  Let $F = x^{d-\alpha} F_1 + y^{v+\alpha+1} F_2 + x^{\beta}y^{d-\beta-1}z$, 
 where $F_1, F_2 \in K[x,y]$ are homogeneous polynomials of degrees 
 $\alpha, d-v-\alpha-1$ respectively such that $F_1$ is square-free, $x \nmid F_1F_2$ and $y \nmid F_1F_2$. 
 Since $F$ is linear in $z$ and by the assumptions on $F_1$ and $F_2$, forces 
 that $F$ is irreducible. We prove the theorem for the case $\beta \neq 0$ 
 and the proof is similar for the case $\beta =0$. Assume $\beta > 0$. We prove $F$ has a Saito 
 matrix  say $A$, as stated above. That is, we need to show that the product 
 $\begin{bmatrix}
 \frac{\partial F}{\partial x} & \frac{\partial F}{\partial y} &  \frac{\partial F}{\partial z}
\end{bmatrix} A \equiv 0$ mod $(F)$ and $\det(A) = F$. 
Since $F$ is homogeneous, by Euler's equation we have 
$x \frac{\partial F}{\partial x}+y \frac{\partial F}{\partial y}
 +z \frac{\partial F}{\partial z} = d~F$. Therefore the first 
 entry of the above product matrix is congruent to zero modulo $(F)$. 
 Now it suffices to show that the last two entries of the above 
 product matrix are congruent to zero modulo $(F)$ and $\det(A) = F$. 
 In fact, we show that the second column of $A$ is a  
 syzygy of $J(F)$ of degree $v$, and there exist homogeneous polynomials 
 $H_1,H_3, H_5 \in K[x,y]$ of degree $v$, $H_6 \in K[x,y]$ 
 of degree $v-1$, such that the third column of $A$ is a 
 syzygy of $J(F)$ of degree $d-v-1$. Then by Saito's criterion, Theorem \ref{saito80}, 
 $F$ is a free divisor. We have  
 \begin{eqnarray*}
  \frac{\partial F}{\partial x} &=& x^{d-\alpha-1} \left( x \frac{\partial F_1}{\partial x}+(d-\alpha)F_1 \right) 
                                    + y^{v+\alpha+1}\frac{\partial F_2}{\partial x}+ \beta x^{\beta-1}y^{d-\beta-1}z, \\
  \frac{\partial F}{\partial y} &=& x^{d-\alpha} \frac{\partial F_1}{\partial y} + y^{v+\alpha} 
                                   \left( y\frac{\partial F_2}{\partial y}+(v+\alpha+1)F_2 \right)
                                    +(d-\beta-1)x^{\beta}y^{d-\beta-2}z, \\
  \frac{\partial F}{\partial z} &=&  x^{\beta}y^{d-\beta-1}.                                 
 \end{eqnarray*}
  Now we show that the second column of $A$ is a syzygy of $J(F)$. 
  That is, we show that  
 \begin{equation} \label{eq3}
 x^2y^2M G_1 \frac{\partial F}{\partial x} - xy^2MG_2\frac{\partial F}{\partial y} 
 + \left(G_3-xyzM(\beta yG_1+(d-\beta-1)G_2)\right)\frac{\partial F}{\partial z}= 0
 \end{equation}
 where $M = x^{\beta-1}y^{\gamma}, G_1 = \frac{\partial F_1}{\partial y}$ and 
  $G_2 = -\left(x\frac{\partial F_1}{\partial x}+(d-\alpha)F_1\right)$. Note that 
  $$G_3 = -\left( xy\frac{\partial F_2}{\partial x}G_1+\left(y\frac{\partial F_2}{\partial y}+(v+\alpha+1)F_2 \right)G_2 \right).$$
 Consider 
 \begin{eqnarray*}
  x^2y^2M G_1 \frac{\partial F}{\partial x} - xy^2MG_2  
 \frac{\partial F}{\partial y} + \left(G_3-xyzM(\beta yG_1+(d-\beta-1)G_2)\right) \frac{\partial F}{\partial z}
 \end{eqnarray*}
 \begin{eqnarray*}
 &=& x^2y^2M \frac{\partial F_1}{\partial y} \left( (d-\alpha)x^{d-\alpha-1}F_1 
 + x^{d-\alpha} \frac{\partial F_1}{\partial x} + y^{v+\alpha+1}\frac{\partial F_2}{\partial x} + \beta x^{\beta-1}y^{d-\beta-1}z \right) \\
& & + xy^2M G_2 \left( x^{d-\alpha} \frac{\partial F_1}{\partial y} + (v+\alpha+1) y^{v+\alpha} F_2 \right) \\
& & + xy^2M G_2 \left( y^{v+\alpha+1}\frac{\partial F_2}{\partial y} + (d-\beta-1) x^{\beta} y^{d-\beta-2}z \right) \\
& & - xy^{v+\alpha+2}M \left( xy\frac{\partial F_2}{\partial x} 
     \frac{\partial F_1}{\partial y} + G_2 (y\frac{\partial F_2}{\partial y}+(v+\alpha+1)F_2) \right) \\ 
 & & - x^2y^{v+\alpha+3}zM^2 \left(\beta y \frac{\partial F_1}{\partial y}+(d-\beta-1) G_2 \right)  \\
 &=& x^2y^2M \frac{\partial F_1}{\partial y} \left( (d-\alpha)x^{d-\alpha-1}F_1 + x^{d-\alpha} \frac{\partial F_1}{\partial x} 
 + y^{v+\alpha+1}\frac{\partial F_2}{\partial x} \right) \\
 & & + xy^2M G_2\left( x^{d-\alpha} \frac{\partial F_1}{\partial y} + (v+\alpha+1) y^{v+\alpha} F_2 
 + y^{v+\alpha+1} \frac{\partial F_2}{\partial y} \right) \\ 
 & & - xy^{v+\alpha+2}M 
 \left( xy\frac{\partial F_2}{\partial x} \frac{\partial F_1}{\partial y} + G_2 (y\frac{\partial F_2}{\partial y}+(v+\alpha+1)F_2) \right) \\
& & + z \left( \beta x^{\beta+1}y^{d-\beta+1} M \frac{\partial F_1}{\partial y}+ (d-\beta-1) x^{\beta+1}y^{d-\beta} M G_2 \right) \\ 
& & - x^{\beta+1}y^{d-\beta}z M \left( \beta y \frac{\partial F_1}{\partial y} + (d-\beta-1) G_2\right) \\
&=& 0 + z ~ 0 ~~~~~~~~~~~~\mbox{(by substituting $G_2$ value)} \\
&=& 0.
\end{eqnarray*}
 Thus \eqref{eq3} is a syzygy of $J(F)$. 
Now we show that the third column of $A$ is a syzygy of $J(F)$. 
We prove this for the case $d = 2v+1$, is odd. Assume $E = ax+by$. We show that  
{\small 
\begin{equation} \label{eq4}
  \left(H_1+xyzMH_2\right) \frac{\partial F}{\partial x}+\left(H_3 +yzMH_4\right) 
  \frac{\partial F}{\partial y} + \left(H_5+H_6z-Mz^2\left(\beta y H_2+(d-\beta-1)H_4\right)\right)\frac{\partial F}{\partial z} = 0
 \end{equation} }
 where $H_2 = G_1 (ax+by), H_4 = G_2 (ax+by)$ and $H_1, H_3, H_5 \in K[x,y]$ 
 are homogeneous polynomials of degree $v$ satisfying the system of equations \eqref{eq1} in the Lemma \ref{lem1} 
and 
\begin{eqnarray*}
H_6 &=& -\beta V_1 - (d-\beta-1) U_1 - \frac{\partial F_2}{\partial x} \frac{\partial F_1}{\partial y}(ax+by) 
  +a \frac{\partial F_2}{\partial y}\left(x\frac{\partial F_1}{\partial x}+(d-\alpha)F_1\right) \\
    & & + b \frac{\partial F_1}{\partial x} \left(y\frac{\partial F_2}{\partial y}+(d-v+\alpha)F_2\right)+W_1+W_2, 
\end{eqnarray*}
 where 
 \begin{eqnarray*}
 H_1  &=& xV_1+ * y^v, \\
 H_3  &=& yU_1+* x^v, \\ 
 a (d-v+\alpha) F_2 ( x\frac{\partial F_1}{\partial x} + (d-\alpha)F_1 )  &=& yW_1+ * x^v, \\ 
 b(d-\alpha)F_1 ( y \frac{\partial F_2}{\partial y} + (d-v+\alpha) F_2 )  &=& x W_2+* y^v, 
 \end{eqnarray*}
 where $*$ are elements of $K$. 
  From the expression of $H_6$, we have 
  \begin{equation} \label{eq2}
  \beta y H_1 + xy \frac{\partial F_2}{\partial x} H_2 
 + (d-\beta-1) x H_3 +\left(y\frac{\partial F_2}{\partial y}+(d-v+\alpha)F_2\right)H_4 + xy H_6 = 0.
 \end{equation}
  Now by comparing the pure powers of $x$ and $y$ in system \eqref{eq1} in the Lemma \ref{lem1} and equation \eqref{eq2}, 
  we get 
\begin{eqnarray*}
a &=& \frac{- \mu (d-\beta-1)}{(d-v+\alpha)^2 [F_2| x^{v-\alpha}]^2[\left(x\frac{\partial F_1}{\partial x}+(d-\alpha)F_1\right)| x^{\alpha}] },  \\
\mu &=& \frac{b (d-\alpha)^2}{\beta }[F_1|y^{\alpha}][(y\frac{\partial F_2}{\partial y}+(d-v+\alpha)F_2)|y^{v-\alpha}], 
\end{eqnarray*}  
  where $[H | x^s]$ denote the coefficient of $x^s$ of $H$, for any polynomial $H$ etc.
 Now consider, 
 {\small 
 \begin{eqnarray*}
 \left(H_1+xyzMH_2\right) \frac{\partial F}{\partial x} + \left(H_3 
 +yzMH_4\right) \frac{\partial F}{\partial y} 
 + (H_5+H_6z-Mz^2\left(\beta y H_2+(d-\beta-1)H_4\right)) \frac{\partial F}{\partial z}
 \end{eqnarray*} }
 it is clear that its coefficient of $z^2$ is zero. Now its 
 coefficient of $z = y^{v+\alpha+1}M ( \beta y H_1 + xy \frac{\partial F_2}{\partial x} \frac{\partial F_1}{\partial y}(ax+by)
 + (d-\beta-1) x H_3 -(y\frac{\partial F_2}{\partial y}+(d-v+\alpha)F_2)(x\frac{\partial F_1}{\partial x}+(d-\alpha)F_1)(ax+by)
 + xy H_6) + x^{d-\alpha}yM (-G_2H_2+G_1H_4)$. This is zero by equation \eqref{eq2} and the fact 
 $G_1H_4 = G_2H_2$. Now the coefficient of $z^0$ in the expression that we considered is equal to, \\ 
 {\small 
 $x^{d-\alpha-1} \left( (x\frac{\partial F_1}{\partial x}+(d-\alpha)F_1) H_1 + x \frac{\partial F_1}{\partial y} H_3 \right)$ \\  
 $+y^{d-v+\alpha-1} \left( y\frac{\partial F_2}{\partial x} H_1 + ((d-v+\alpha) F_2 + y \frac{\partial F_2}{\partial y}) H_3 
 + x^{\beta} y^{v-\alpha-\beta } H_5 \right)$ \\
 $= x^{d-\alpha-1} \left( -G_2 H_1 + x G_1 H_3 \right)  
 +y^{d-v+\alpha-1} \left( y\frac{\partial F_2}{\partial x} H_1 + ((d-v+\alpha) F_2 + y \frac{\partial F_2}{\partial y}) H_3 
 + x^{\beta} y^{v-\alpha-\beta } H_5 \right)$. }
 Since $x^{d-\alpha-1}, y^{d-v+\alpha-1}$ is a regular sequence, therefore 
 this expression is equal to zero if and only if the system \eqref{eq1} in the Lemma \ref{lem1} has 
 a solution. By the Lemma \ref{lem1}, the system \eqref{eq1} has a solution. 
\noindent 
Therefore \eqref{eq4} is a syzygy of $J(F)$. In a similar manner one can prove 
the even case also. Hence 
$\begin{bmatrix}
 \frac{\partial F}{\partial x} & \frac{\partial F}{\partial y} &  \frac{\partial F}{\partial z}
\end{bmatrix} A \equiv 0$ mod $(F)$.  
 If we know, the syzygies \eqref{eq3} and \eqref{eq4} are not 
 multiple of a syzygy, then by \cite[Lemma 1.1]{st12}, $J(F)$ is perfect. But 
by looking at the structure of the syzygies \eqref{eq3} and \eqref{eq4} it is 
difficult to say that one is not a multiple of another or both are not multiples 
of a syzygy.  
\vskip 1mm
Now we prove $\det(A) = F$. We have 
\begin{eqnarray*}
 d\mu \det(A) &=& x^2y^2MG_2 U -x \left( G_3-xyzM(\beta yG_1+(d-\beta-1)G_2) \right) \left(H_3+yzMH_4\right) \\
         && -x^2y^2M \frac{\partial F_1}{\partial y} \left[ yH_5+yzH_6-H_3z-yz^2M \left(\beta yH_2+(d-\beta)H_4 \right) \right] \\ 
         && +x^2y^3z^2M^2 \frac{\partial F_1}{\partial y} \left( \beta y\frac{\partial F_1}{\partial x}
            -(d-\beta)\left(x\frac{\partial F_1}{\partial x}+(d-\alpha)F_1\right) \right) \\
         && +\left(H_1+xyzMH_2\right)\left(yG_3-xy^2zM\left(\beta yG_1+(d-\beta)G_2 \right) \right).        
\end{eqnarray*}
Now, the coefficient of $z^2$ in $d\mu \det(A) =$
{\small 
\begin{eqnarray*}
    x^2y^2M^2 \left( \beta y G_1 + (d-\beta-1)G_2 \right) (H_4-yH_2) 
      + x^2y^2M\left( \beta y H_2 + (d-\beta-1)H_4 \right) (-G_2+yG_1).    
\end{eqnarray*} }
 This is equal to zero, because $H_2=G_1 E$, $H_4=G_2 E$ and $-G_2+yG_1= dF$. 
 Now the coefficient of $z$ in $d\mu \det(A) =$
 \begin{eqnarray*}
  && x^2y^2M G_2H_6 - xyM G_3H_4 +x^2yM H_3(\beta yG_1+(d-\beta-1)G_2)- x^2y^3M G_1H_6 \\ 
  && + x^2y^2MG_1H_3-xy^2MH_1(\beta yG_1+(d-\beta-1)G_2) - xy^2MG_2H_1 + xy^2M G_3H_2  
 \end{eqnarray*}
 Now substituting the system \eqref{eq1} of the Lemma \ref{lem1} in this, then this is equal to \\
 $xyM [ \mu (\beta+1)y^{v+\alpha+1}+(\beta+1-d)yG_2H_1 -G_3H_4 +yG_3H_2 -xy\frac{\partial F_2}{\partial x}G_2H_2$ \\ 
 $-G_2H_4( (d-v+\alpha)F_2+y \frac{\partial F_2}{\partial y} ) +xy^2 \frac{\partial F_2}{\partial x}G_1H_2 
 +(d-\beta-1)xy G_1H_3 +yG_1H_4( (d-v+\alpha)F_2+y \frac{\partial F_2}{\partial y} )  ] $. 
 Now by using the fact, $G_2H_2 = G_1H_4$ and system \eqref{eq1}, this expression is equal to 
 $xyM[ \mu dy^{v+\alpha+1}+xy\frac{\partial F_2}{\partial x}G_1H_4 
 - yG_1H_4((d-v+\alpha)F_2+y \frac{\partial F_2}{\partial y})+yG_1H_4 (-2x\frac{\partial F_2}{\partial x}+dF_2 )]$. 
 Now by using Euler's equation this is equal to $\mu d x^{\beta}y^{2v-\beta}$. Thus we showed that 
 the coefficient of $z$ in $d\mu \det(A)$ is equal to $\mu d x^{\beta}y^{2v-\beta}$. Now consider 
  \begin{eqnarray*}
 \mbox{the coefficient of $z^0$ in } d\mu \det(A) &=& x^2y^2M G_2H_5 -x G_3H_3 -x^2y^3M G_1H_5 +y H_1G_3 \\ 
                                             &=& -xG_3H_3+yG_3H_1 -dxF_1 [ \mu x^{2v-\alpha}-y\frac{\partial F_2}{\partial x}H_1 \\
                                             & & -H_3(y\frac{\partial F_2}{\partial y}+(d-v+\alpha)F_2)] \\
                                             & & \mbox{(by using the system \eqref{eq1} of the Lemma \ref{lem1})} \\
                                             &=& d\mu F_1 x^{2v-\alpha+1}+d xyF_2 G_1H_3-d yF_2G_2H_1 \\
                                             & & \mbox{(by substituting $G_3$ and Euler's equation )} \\                                   
                                             & & \mbox{(by using the system \eqref{eq1} of the Lemma \ref{lem1})}.
\end{eqnarray*}
Therefore 
\begin{eqnarray*}
\det(A) &=& \frac{1}{d \mu}\left[ d\mu F_1 x^{2v-\alpha+1}+d\mu F_2 y^{v+\alpha+1}+d\mu x^{\beta}y^{2v-\beta}z \right]\\
        &=& x^{2v-\alpha+1} F_1 + y^{v+\alpha+1} F_2 + x^{\beta}y^{2v-\beta}z \\
        &=& F.
 \end{eqnarray*}
 Thus $A$ is a discriminant matrix of $F$. 
 If $\beta = 0$, then the Saito matrix is same as $A$ except its last column. The last column 
 is the syzygy of $J(F)$ of the form: 
 $$\left(H_1-\lambda xy^{v-\alpha-1}z\frac{\partial F_1}{\partial y}\right) \frac{\partial F}{\partial x}+ 
 \left(H_3+\lambda y^{v-\alpha-1}z ((d-\alpha)F_1+x\frac{\partial F_1}{\partial x})\right)\frac{\partial F}{\partial y}$$  
 $$+\left(H_5+U z-\lambda (d-1)y^{v-\alpha-2}z^2((d-\alpha)F_1+x\frac{\partial F_1}{\partial x})\right)\frac{\partial F}{\partial z}=0,$$
 where $H_1, H_3, H_5 \in K[x,y]$ are as above, satisfying the system \eqref{eq1} of the Lemma \ref{lem1}, with $\beta=0$ and 
 $$U =((d-\alpha)F_1+x\frac{\partial F_1}{\partial x})\left(V_3- \frac{\lambda (d-v+\alpha)}{d-1} W_3 
 -\frac{\lambda}{d-1} x \frac{\partial F_1}{\partial y} \frac{\partial F_2}{\partial y} \right)
 $$
 $+ \frac{\lambda}{d-1} x^2 (\frac{\partial F_1}{\partial y})^2 \frac{\partial F_2}{\partial x} 
 + \mu y^{v+\alpha-1}$, where $\lambda \in K$, $x\frac{\partial F_1}{\partial y}F_2 = W_3y+*x^v$, 
 $H_1=yV_3+*x^v$, $*$ are elements of $K$. 
\end{proof}

\begin{remark}
Let $F$ be as in the Theorem \ref{mainthm}. The radical of the Jacobian ideal, $J(F)$, of $F$ is the ideal 
$(x,y)$. Hence $F$ has only one singular point $(0:0:1)$. The minimal graded free resolution 
of $J(F)$ is given by 
$$0 \longrightarrow S^2(-3v) \longrightarrow S^3(-2v) \longrightarrow S \longrightarrow 0$$
if $d=2v+1$, and 
$$0 \longrightarrow S(-(3v-2))\oplus S(-(3v-1)) \longrightarrow S^3(-(2v-1)) \longrightarrow S \longrightarrow 0$$ 
if $d= 2v$, where $S= K[x,y,z]$. From these, the multiplicity of the singular point $(0:0:1)$ of $F$ is equals to 
$$
 \left\{
\begin{array}{ll}
3v^2 & \mbox{ if } d=2v+1,  \\
3v^2-3v+1 & \mbox{ if } d=2v.
\end{array}
\right.
$$
\end{remark}
\begin{remark}
 Let $F_1,F_2$ and $F$ be as in the Theorem \ref{mainthm} except the assumption that $F_1$ is square-free. 
Then there is a computational evidence that $F$ is a free divisor with the last two columns of the 
Saito matrix of $F$ are syzygies of degrees need not be equal to $v$ or $v-1$. 
\end{remark}
\begin{remark}
 The monomial support of the free divisors in the Theorem \ref{mainthm} contains 
two intervals $\{x^{d},\ldots, x^{d-\alpha}y^{\alpha}\}$, 
$\{x^{\lfloor \frac{d}{2} \rfloor-\alpha}y^{d-\lfloor \frac{d}{2} \rfloor+\alpha}, 
 \ldots, y^d  \}$ and it is maximal. 
 There is a computational evidence that there are maximal monomial supports for irreducible 
free divisors in $\mathbb{P}^2$, for instance  
$$\{x^d\}\cup \{x^{d-v+2}y^{v-2}\}\cup \{x^{\lceil v/2 \rceil+2}y^{d-\lceil v/2 \rceil-2},
\ldots,x^5y^{d-5}\}\cup \{x^2y^{d-2}, xy^{d-1}, y^d\} \cup \{ y^{d-1}z \}$$  
and different from the family presented in the Theorem \ref{mainthm}, for $d=2v+1$. 
\end{remark}
  The results presented in this paper have been inspired and suggested by computations performed
 by computer algebra system CoCoA, \cite{cocoa}. 


\begin{thebibliography}{1111}

\bibitem{ac07} C. D'Andrea\ and\ J. Chipalkatti, {\em On the Jacobian ideal of the binary discriminant}, 
Collect. Math. {\bf 58} (2007), no.~2, 155--180.

\bibitem{br85} J. W. Bruce, {\em Vector fields on discriminants and bifurcation varieties}, 
Bull. London Math. Soc. {\bf 17} (1985), no.~3, 257--262. 

\bibitem{bc12} R.-O. Buchweitz and A. Conca, {\em New free divisors from old}, arXiv:1211.4327v1, 
to appear in J. Commut. Algebra (2012).  

\bibitem{beg09} R.-O. Buchweitz, W. Ebeling\ and\ H.-C. Graf von Bothmer, 
{\em Low-dimensional singularities with free divisors as discriminants}, 
J. Algebraic Geom. {\bf 18} (2009), no.~2, 371--406. 

\bibitem{cocoa} {CoCoA}Team, cocoa: a system for doing {C}omputations in {C}ommutative {A}lgebra. 
Available at \/ {\tt http://cocoa.dima.unige.it}. 

\bibitem{da98} J. Damon, {\em On the legacy of free divisors: discriminants and Morse-type singularities},
Amer. J. Math. {\bf 120} (1998), no. 3, 453–492. 

\bibitem{da02} J. Damon, {\em On the freeness of equisingular deformations of plane curve singularities}, 
Topology Appl. {\bf 118} (2002), no.~1-2, 31--43. 

\bibitem{da03} J. Damon, {\em On the legacy of free divisors II, Free${}\sp *$ divisors and complete intersections}, 
Mosc. Math. J. {\bf 3} (2003), no.~2, 361--395.

\bibitem{da06} J. Damon, {\em On the legacy of free divisors. III. Functions and divisors on complete intersections}, 
Q. J. Math. {\bf 57} (2006), no.~1, 49--79.

\bibitem{dm91} J. Damon\ and\ D. Mond, {\em ${\mathcal{A}}$-codimension and the vanishing topology of discriminants}, 
Invent. Math. {\bf 106} (1991), no.~2, 217--242.

\bibitem{gms11} M. Granger, D. Mond\ and\ M. Schulze, {\em Free divisors in prehomogeneous vector spaces}, 
Proc. Lond. Math. Soc. (3) {\bf 102} (2011), no.~5, 923--950.

\bibitem{ns11} A. N. Nejad\ and\ A. Simis, {\em The Aluffi algebra}, J. Singul. {\bf 3} (2011), 20--47.

\bibitem{ot92} P. Orlik\ and\ H. Terao, {\em Arrangements of hyperplanes}, 
Grundlehren der Mathematischen Wissenschaften, 300, Springer, Berlin, 1992. 

\bibitem{si80} K. Saito, {\em Theory of logarithmic differential forms and 
logarithmic vector fields}, J. Fac. Sci. Univ. Tokyo Sect. IA Math. {\bf 27} (1980), no.~2, 265--291. 

\bibitem{sc12} M. Schulze, {\em Freeness and multirestriction of hyperplane arrangements}, 
Compos. Math. {\bf 148} (2012), no.~3, 799--806.

\bibitem{se09} J. Sekiguchi, {\em A classification of weighted homogeneous Saito free divisors}, 
J. Math. Soc. Japan {\bf 61} (2009), no.~4, 1071--1095. 

\bibitem{si05} A. Simis, {\em Differential idealizers and algebraic free divisors}, in Commutative Algebra:
Geometric, Homological, Combinatorial and Computational Aspects, Lecture Notes
in Pure and Applied Mathematics (Eds. A. Corso, P. Gimenez, M. V. Pinto and S.
 Zarzuela), Chapman \& Hall/CRC, Volume 244 (2005) 211--226.

\bibitem{sms06} A. Simis, {\em The depth of the Jacobian ring of a homogeneous polynomial in three variables}, 
Proc. Amer. Math. Soc. {\bf 134} (2006), no.~6, 1591--1598 (electronic). 

\bibitem{st09} H. Schenck and S. O. Tohaneanu, 
{\em Freeness of conic-line arrangements in $\Bbb P\sp 2$}, 
Comment. Math. Helv. {\bf 84} (2009), no.~2, 235--258.

\bibitem{st12} A. Simis and S. O. Tohaneanu, {\em Homology of homogeneous divisors}, arXiv: 1207.5862v1, 
to appear in Israel J. Math. (2012). 

\bibitem{te80} H. Terao, {\em Arrangements of hyperplanes and their freeness I, II},  
J. Fac. Sci. Univ. Tokyo Sect. IA Math. {\bf 27} (1980), no.~2, 293--320. 

\bibitem{te81a} H. Terao, {\em Generalized exponents of a free arrangement of hyperplanes and Shepherd-Todd-Brieskorn formula}, 
Invent. Math. {\bf 63} (1981), no.~1, 159--179. 

\bibitem{te81b} H. Terao, {\em The exponents of a free hypersurface}, in 
{\it Singularities, Part 2 (Arcata, Calif., 1981)}, 561--566, Proc. Sympos. Pure Math., 40 Amer. Math. Soc., Providence, RI. 

\bibitem{te83} H. Terao, {\em The bifurcation set and logarithmic vector fields}, 
Math. Ann. {\bf 263} (1983), no.~3, 313--321. 

\bibitem{t12} S. O. Tohaneanu, {\em On freeness of divisors on $\Bbb P\sp 2$}, arXiv:1203.2046, 
to appear in Comm. Algebra (2012). 

\bibitem{vs95} D. Van Straten, {\em A note on the discriminant of a space curve}, 
Manuscripta Math. {\bf 87} (1995), no.~2, 167--177. 

\bibitem{y05} M. Yoshinaga, {\em On the freeness of 3-arrangements}, 
Bull. London Math. Soc. {\bf 37} (2005), no.~1, 126--134. 
\end{thebibliography}
\end{document}